\def\Cal{\mathcal}
\def\Ch{\operatorname{Ch}} 
\def\<<{\langle } 
\def\>>{\rangle }
\numberwithin{equation}{section} 
\newtheorem{theorem}{Theorem}[section] 
\newtheorem{proposition}[theorem]{Proposition} 
\newtheorem{definition}[theorem]{Definition}
\newtheorem{lemma}[theorem]{Lemma}
\def\<{\langle} 
\def\>{\rangle} 
\begin{document}

\title[Fundamental group of non-singular locus of Lauricella's $F_C$]
{Fundamental group of non-singular locus of Lauricella's $F_C$}
\subjclass[2010]{14F35, 57M05, 55Q52}

% \author{Yoshiaki Goto}
% \address[Goto]{
% General Education, Otaru University of Commerce, Otaru,
% Hokkaido, 047-8501, Japan}
% \email{goto@res.otaru-uc.ac.jp}
% \author{Keiji Matsumoto}
% \address[Matsumoto]{
% Department of Mathematics, Hokkaido University, Sapporo 060-
% 0810, Japan}
% \email{matsu@math.sci.hokudai.ac.jp}
\author{Tomohide Terasoma}
\address[Terasoma]{
Graduate school of Mathematical Sciences, 
The University of Tokyo, Tokyo 153-8914 Japan}
\email{terasoma@ms.u-tokyo.ac.jp}

%\advance\currenthour by -1
%\yyyymmdddate
%\today,
%\quad 
%\currenttime, \quad\jobname

\maketitle

\medskip

\begin{abstract}
In this paper, we give a set of generators and relations of the fundamental
group $\pi_1(\overline{Y_n})$ of the non-singular locus $\overline{Y_n}$
of Lauricella's hypergeometric function $F_C$.
\end{abstract}

\tableofcontents
%\par\bigskip

%\setcounter{section}{-1}
\section{Introduction and motivation}
The Lauricella hypergeometric function $F_C^{(n)}$ of $n$ variable defined by
\begin{align*}
&F_C^{(n)}(a,b;c_1,\dots, c_n;z_1, \dots, z_n)
\\
=&
\sum_{m_1, \dots, m_n\in \bold Z_{\geq 0}}\dfrac{(a,m_1+\cdots +m_n)(b,m_1+\cdots +m_n)z_1^{m_1}\cdots z_1^{m_1}}
{(c_1,m_1)\cdots (c_n,m_n)m_1!\cdots m_n!},
\end{align*}
and has the following integral expression (\cite{G}):
$$
(\text{const.})\int \prod_{k=1}^n t_k^{-c_k}\cdot 
(1-\sum_{k=1}^n t_k)^{\sum c_k-a-n}\cdot (1-\sum_{k=1}^n\dfrac{z_k}{t_k})^{-b}
dt_1\cdots dt_n.
$$
Using Caylay technique \cite{GKZ}, the function $F_C^{(n)}$ is locally holomorphic on$(z_i)\in (\bold C^{\times})^{n}$
if the toric hypersurface
$$
\{((t_i)_i,\lambda)\in (\bold C^{\times})^{n+1}
\mid\lambda(1-\sum_{k=1}^n t_k)+(1-\sum_{k=1}^n \dfrac{z_i}{t_k})=0\}
$$
is non-degenerate for Newton polyhadra. For non-degneracy condition, see \cite{T}.

Since the non-degeneracy condition for a proper Newton polyhedra is equal to the smoothness of the
varieties
\begin{align*}
&\{\lambda(1+\sum_{i\in I}t_i)+\sum_{j\in J}\dfrac{a_j}{t_j}=0\},
\\
&\{\lambda(\sum_{i\in I}t_i)+1+\sum_{j\in J}\dfrac{a_j}{t_j}=0\}
\end{align*}
for $I, J\subset \{1, \dots, n\}$ and $I\cap J=\emptyset$.
Therefore the non-degeneracy condition is equivalent to the smoothness to
the open face.
Using Jacobian criterion, the singular locus is defined by
\begin{align*}
\begin{cases}
1-\sum_{k=1}^n t_k=0,
\\
\lambda-\dfrac{z_i}{t_i^2}=0,
\\
\lambda(1-\sum_{k=1}^n t_k)+(1-\sum_{k=1}^n \dfrac{z_i}{t_k})=0.
\end{cases}
\end{align*}
By setting $\mu^2=\lambda, x_i^2=z_i$,
and using the first and the second equations, $\mu$ is obtained by
$$t_i\mu=\epsilon_ix_i,\quad
\mu-\sum_{i=1}^n \epsilon_ix_i=
\mu(1-\sum_{i=1}^n t_i)=0.
$$
Here $\epsilon_i\in \{-1,1\}$. Again, using the first and the second equations, the third equation is equal to
\begin{align*}
0&=1-\sum_{i=1}^n \dfrac{x_i^2}{t_i}
=1-\lambda  \sum_{i=1}^n  t_i=1-\lambda=(1+\mu)(1-\mu)
\\
&
=(1+\sum_{i=1}^n \epsilon_ix_i)
(1-\sum_{i=1}^n \epsilon_ix_i).
\end{align*}
Therefore under the $\mu_2^n$-covering map, 
$$
\bold C^n=\{(x_1,\dots, x_n)\}\ni (x_i)_i \mapsto (x_i^2)_i=(z_i)_i\in
\bold C^n=\{(z_1,\dots, z_n)\}.
$$
the pull back $Y_n$ of $\overline{Y_n}$ is given by 
$$
Y_n=\{(x_i)_i\mid \prod_{k=1}^n x_k \prod_{\epsilon_i \in \{-1,1\}}(1-\sum_{i=1}^n \epsilon_ix_i)\neq 0\}.
$$
% the singular locus is contained in the hypersurface defined by
% $$
% \prod x_k \prod_{\epsilon_i \in \{-1,1\}}(1-\sum \epsilon_i x_k)=0.
% $$
See also \cite{HT}.
Therefore $\overline{Y_n}\subset \{(z_1, \dots, z_n)\}$ is isomorphic to $Y_n/\mu_2^n$.
%given by
%$$
%\overline{Y_n}=\{(z_i)_i\mid F(z_1, \dots, z_n)\cdot \prod z_k\neq 0 \}
%$$

In the study of monodromy of hypergeometric function of type $F_C$,
it is a basic problem to give an expression of 
the fundamental group of $\overline{Y_n}$.
The generator and relations of the fundamental group for 
$n=2$ and $n=3$ is determined in \cite{GK}.
We prove the following presentation of the fundamental group which is conjectured in
\cite{GK}.
\begin{theorem}[Main Theorem, see Theorem \ref{main theorem} and Proposition 
\ref{restatement of commutativity rel}]
\label{main theorem introduction}
The fundamental group of $\overline{Y_n}$ is generated by elements
$\Gamma_0, \Gamma_1,\dots, \Gamma_n$ with the relations
\begin{align*}
[\Gamma_i,\Gamma_j]=1,  \quad (1\leq i,j\leq n),
\quad 
(\Gamma_0\Gamma_i)^2=(\Gamma_i\Gamma_0)^2, \quad (1\leq i\leq n),
\end{align*}
and
\begin{equation*}
[M(I)^{-1}\Gamma_0 M(I),M(J)^{-1}\Gamma_0 M(J)]=1
\end{equation*}
for all subsets $I$ and $J$ of $\{1, \dots, n\}$ 
satisfying $I\cap J=\emptyset, I\neq \emptyset, J\neq \emptyset$ and $\#I+\#J\leq n-1$.
Here we set $M(I)=\prod_{i\in I}\Gamma_i$.
\end{theorem}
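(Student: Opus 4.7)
The natural starting point is the $\mu_2^n$-Galois covering $p\colon Y_n\to \overline{Y_n}$ recalled in the introduction, which yields a short exact sequence
\[
1\to\pi_1(Y_n)\to\pi_1(\overline{Y_n})\to\mu_2^n\to 1
\]
and suggests the geometric meaning of the claimed generators. Take $\Gamma_1,\dots,\Gamma_n$ to be single-turn meridians around the branch divisors $\{z_i=0\}$ in $\overline{Y_n}$: each lifts in $Y_n$ to a path connecting two sheets exchanged by the sign flip $x_i\mapsto -x_i$, so the image of $\Gamma_i$ in $\mu_2^n$ is the standard generator $e_i$. The element $\Gamma_0$ is an honest meridian around a chosen reference hyperplane $H_{+}\colon 1-\sum_i x_i=0$ in $Y_n$, pushed down to $\overline{Y_n}$. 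Since $\mu_2^n$ permutes the $2^n$ hyperplanes $H_\epsilon\colon 1-\sum_i\epsilon_i x_i=0$ transitively by sign changes, and conjugation by $\Gamma_i$ realises the deck transformation $e_i$ inside $\pi_1(\overline{Y_n})$, the element $M(I)^{-1}\Gamma_0 M(I)$ is precisely the meridian around the hyperplane $H_{\epsilon^I}$ with $\epsilon^I_i=-1$ exactly when $i\in I$.

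With this dictionary in place, the claimed relations are verified by local computations on small analytic neighbourhoods of the relevant codimension-two strata of the arrangement. The commutativity $[\Gamma_i,\Gamma_j]=1$ reduces to the abelianness of the fundamental group of the complement of two coordinate hyperplanes on a generic $2$-slice, combined with the descent of the meridians through the $\mu_2^2$-action on the two axes. The Coxeter-type relation $(\Gamma_0\Gamma_i)^2=(\Gamma_i\Gamma_0)^2$ is a local monodromy identity at the stratum where the two hyperplanes $H_+$ and $H_-^{(i)}$ (differing only in the $i$th sign) meet the branch divisor $\{x_i=0\}$ along a common codimension-two subvariety; a transverse $2$-slice displays this as three concurrent lines in $\mathbb{C}^2$ with a $\mu_2$-symmetry that swaps two of them, and the length-four ($B_2$-type) braid relation descends through the quotient to the stated identity. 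For the long commutativity relations, the hyperplanes $H_{\epsilon^I}$ and $H_{\epsilon^J}$ are distinct, non-parallel, and generically meet no other $H_\epsilon$ along their codimension-two intersection exactly when $I\cap J=\emptyset$ and $\#I+\#J\leq n-1$; the two meridians then commute by standard transversality for hyperplane arrangements.

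Completeness of the presentation is the main obstacle. The strategy is a Zariski--van Kampen argument on $Y_n$: project $Y_n\subset\mathbb{C}^n_x$ to $\mathbb{C}^{n-1}$ along a generic linear pencil, giving a fibration whose generic fibre is a punctured affine line, and use it to present $\pi_1(Y_n)$ in terms of meridians of all $n+2^n$ hyperplanes modulo braid-monodromy relations accumulated at the critical values of the pencil. Reidemeister--Schreier applied to the short exact sequence above then translates this into a presentation of $\pi_1(\overline{Y_n})$ on the generators $\Gamma_0,\Gamma_1,\dots,\Gamma_n$. The hard step will be the enumeration of the codimension-two strata of the arrangement and the matching of each braid-monodromy contribution to one of the three relation families in the theorem; in particular one must verify that the ``boundary cases'' $\#I+\#J=n$ do not contribute any relations beyond those already forced. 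An induction on $n$ with the cases $n\leq 3$ from \cite{GK} serving as base, together with a judicious choice of pencil direction, looks like the cleanest route.
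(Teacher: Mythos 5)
Your proposal takes a genuinely different route from the paper --- the paper never uses Zariski--van Kampen or braid monodromy at all. Instead it works with Salvetti's cell complex $X_2$ for the complexified real arrangement $\mathcal H_n$ (the $2^n$ hyperplanes $H_\epsilon$ together with the coordinate hyperplanes $L_i$), observes that the $\mu_2^n$-action on $X_2$ is cell-wise and free, passes to the quotient complex $\overline{X_2}$, contracts a simply connected ``spanning complex'' to a point, and then reads off a finite presentation whose generators are the surviving $1$-cells and whose relations are indexed by the three types of $2$-cells. Completeness is then proved by writing down explicit mutually inverse homomorphisms $\varphi$ and $\psi$ between the abstract group $G$ and $\pi_1(\overline{X_2}^{(s)},s)$. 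The advantage of that route is that every relation is accounted for by a concrete $2$-cell, so nothing is left to an unexamined braid-monodromy computation.

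That said, your write-up has a genuine gap precisely at the point you yourself identify as ``the main obstacle.'' The content of the theorem is the completeness of the presentation, and your plan for it --- a generic pencil on $Y_n$, braid monodromy at the critical values, then descent to $\pi_1(\overline{Y_n})$ --- is left entirely unexecuted: you do not enumerate the codimension-two strata, do not compute the local braid monodromies, and do not show that the resulting relations reduce to the three stated families. Moreover, the descent mechanism you name is pointed the wrong way: Reidemeister--Schreier produces a presentation of a finite-index \emph{subgroup} from a presentation of the ambient group, whereas here $\pi_1(Y_n)$ is the subgroup and $\pi_1(\overline{Y_n})$ is the extension; you would instead need the standard presentation of a group extension (generators of the kernel plus lifts of generators of $\mu_2^n$, with relations recording the action and the $2$-cocycle), and the bookkeeping of which conjugate $M(I)^{-1}\Gamma_0 M(I)$ represents the meridian of which $H_{\epsilon^I}$ depends on base-point and path choices that you assert but do not verify (the paper does this via an explicit chain $\epsilon^{(0)},\dots,\epsilon^{(k)}$ and an inductive lemma). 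Finally, note that $H_{\epsilon}\cap H_{\epsilon'}$ lies on no third hyperplane of the arrangement for essentially all pairs $\epsilon\neq\pm\epsilon'$; the restriction to disjoint $I,J$ with $\#I+\#J\leq n-1$ is not a transversality condition but comes from normalizing by the common sign set $K=S(\epsilon)\cap S(\epsilon')$ and from the requirement $H_{\epsilon,\mathbf R}\cap H_{\epsilon',\mathbf R}\cap\mathbf R_{>0}^n\neq\emptyset$, so your stated justification for that index range does not match what actually needs to be proved.
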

For the proof of this theorem, we use a cell complex constructed by Salvetti \cite{S}, 
which is homotopic to the complement of a hyperplane arrangement
in $\bold C^N$ and stable under a group action.
The author is grateful for discussions with Y. Goto and K. Matsumoto 
in ``Workshop on Special Varieties in Tambara, 2017'', in Tambara International Seminar House.

\section{Recall of a result of Salvetti}
We recall a construction of $2$-skeleton of a cell complex which is homotopic
to the complement of real hyperplane arrangement.
A finite set $\Cal H=\{H_i\}_{i\in I}$ of complex hyperplanes in $\bold C^n$ is called a
hyperplane arrangement. In this paper, we are interested in the topological space
$$
Y=Y(\Cal H)=\bold C^n-\cup_{i\in I}H_i.
$$
A hyperplane arrangement is called a real hyperplane arrangement if the defining equations
of $H_i$ is defined over $\bold R$ for all $i\in I$.
For a 
real hyperplane arrangement $\Cal H$, 
we set $H_{i,\bold R}=H_i\cap \bold R^n$.
The set $\{H_{i,\bold R}\}_{i\in I}$
is denoted by $\Cal H_R$.
A subset of $\bold R^n$ which can be obtained by the intersection of finite number
of $H_{i,\bold R}$'s is simply called a linear subset of $\Cal H_{\bold R}$.
As a special case, the total space $\bold R^n$ is an $n$-dimensional linear subset.
Let $L$ be an $i$-dimensional linear subset of $\Cal H_\bold R$.
A connected component of the complement of the union of proper linear subsets of $L$ in
$L$ is called an $i$-chamber of $\Cal H_{\bold R}$ and the set of $i$-chamber is denoted by
$\Ch_i=\Ch_i(\Cal H_\bold R)$.
Each $i$-chamber is a convex set.

We define the dual cell complex of $\Cal H_{\bold R}$ as follows.
For each $i$-dimensional chamber $\sigma$, we choose a vertex $v_{\sigma}$ in
the interior of $\sigma$. 
The set of $0$-cell of the dual cell complex is given by $D_{\sigma}=v_{\sigma}$, where
$\sigma$ is an $n$-chamber.

Let $\tau$ be an $(n-1)$-chamber. Then there exist exactly two
$n$-chambers
$\tau_1$ and $\tau_2$
 such that $\overline{\tau_i}\supset \overline{\tau}$ for $i=1,2$.
Here $\overline{\tau}$ is the closure of $\tau$ in $\bold R^n$.
We consider 1-cell $D_{\tau}$ by considering the union of segments $\Delta(v_{\tau_1},v_{\tau})$ 
and $\Delta(v_{\tau_2},v_{\tau})$.
We continue this procedure to define $2$-cell $D_{\sigma}$ attached to $(n-2)$-dimensional
chamber as follows. If $\sigma_1, \sigma_2$ and $\sigma$ are $n, (n-1)$ and $(n-2)$-chambers, 
such that 
$$
\overline{\sigma_1}\supset \overline{\sigma_2}\supset \overline{\sigma}.
$$
A sequence $F=F(\sigma_1,\sigma_2,\sigma)$ as above is called a (descending) flag of length $3$.
The triangle $\Delta(v_{\sigma_1},v_{\sigma_2},v_{\sigma})$ is called the dual flag $F^*$ of $F$.
The union of dual flags containing $v_{\sigma}$ is called the $2$-dimensional dual cell $D_{\sigma}$ of 
$\sigma$.

\begin{figure}[htbp]
\includegraphics[width=4.5cm]{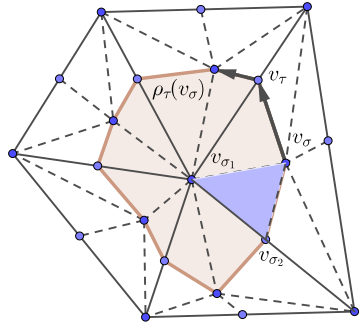}
\caption{Dual cell}
\label{dual cell}
\end{figure}

We recall the construction of the $2$-skeleton $X_2$ of the cell complex $X$ after Salvetti \cite{S}, 
which is homotopy equivalent
to the space $Y=Y(\Cal H)$.
The set $C_0(X)$ of $0$-cell in $X$ is the set $\{\widetilde{D_\sigma}\}_{\sigma\in \Ch_n}$
of the copy $\widetilde{D_{\sigma}}$ of $D_{\sigma}$.

The set $C_1(X)$ of $1$-cell consists of $\widetilde D_{\sigma,\tau}$
for $\sigma \in \Ch_n, \tau \in \Ch_{n-1}$ such that $\overline{\sigma}\supset \tau$.
The $n$-chamber lying on the opposite side of $\sigma$ with respect to the $(n-1)$-chamber $\tau$ 
is denoted by $\rho_{\tau}(\sigma)$.
The attaching map $\partial \widetilde D_{\sigma,\tau}\to X_0$ is given by
connecting two points $\sigma$ and $\rho_{\tau}(\sigma)$. The $1$-cell $\widetilde D_{\sigma,\tau}$
is called an arrow from $\sigma$ to $\rho_{\tau}(\sigma)$. The composite of several arrows 
compatible with the directions is called an oriented path.

The set $C_2(X)$ of $2$-cell consists of 
$\widetilde D_{\sigma,\tau}$ for $\sigma \in \Ch_n, \tau \in \Ch_{n-2}$ such that 
$\overline{\sigma}\supset \overline{\tau}$.
The $n$-chamber lying on the opposite side of $\sigma$ with respect to the $(n-2)$-chamber $\tau$ 
is denoted by $\rho_{\tau}(\sigma)$ and the vertex in $\rho_{\tau}(\sigma)$ is denoted by
$\rho_{\tau}(v_\sigma)$ (see Figure \ref{bounding disc}).
\begin{figure}[htbp]
\includegraphics[width=4cm]{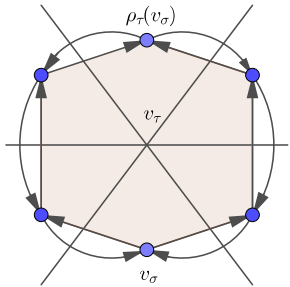}
\caption{Relations}
\label{bounding disc}
\end{figure}
Then there exist exactly two shortest paths from $v_\sigma$ to $\rho_{\tau}(v_\sigma)$.
The attaching map $\partial \widetilde D_{\sigma,\tau}\to X_1$ is given by
bounding the two shortest paths from $v_\sigma$ to $\rho_{\tau}(v_\sigma)$ 
(see Figure \ref{bounding disc}).

\begin{proposition}[Salvetti]
The natural inclusion $X_2 \to Y$ induces an isomorphism of fundamental groups
$$
\pi_1(X_2) \to \pi_1(Y).
$$
As a consequence, the fundamental groupoid is generated by $\widetilde D_{\tau_1,\tau_2}$
for 
$\tau_1\in \Ch_n, \tau_2\in \Ch_{n-1}, \overline{\tau_1}\supset\overline{\tau_2}$,
and the relation is given by $\widetilde D_{\sigma_1,\sigma_2}$
for $\sigma_1\in \Ch_n, \sigma_2\in \Ch_{n-2}, \overline{\sigma_1}\supset\overline{\sigma_2}$.
\end{proposition}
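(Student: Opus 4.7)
The plan is to extend the $2$-skeleton $X_2$ to the full Salvetti cell complex $X$, show that $X$ is homotopy equivalent to $Y$, and then invoke the standard fact that the inclusion of the $2$-skeleton of a CW complex induces an isomorphism on $\pi_1$. The second sentence of the proposition—about generators from $1$-cells and relations from $2$-cells—is then just the CW presentation of $\pi_1(X_2)$.

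First I would build the higher-dimensional cells by continuing the inductive construction already sketched for $k \leq 2$. For a descending flag $F = (\sigma_1,\sigma_2,\dots,\sigma_{k+1})$, where $\sigma_i$ is an $(n-i+1)$-chamber with $\overline{\sigma_1} \supset \overline{\sigma_2} \supset \cdots \supset \overline{\sigma_{k+1}}$, form the simplex $\Delta(v_{\sigma_1},\dots,v_{\sigma_{k+1}})$, and define the $k$-dimensional dual cell $D_\tau$ of an $(n-k)$-chamber $\tau$ as the union of such simplices ending at $v_\tau$. The $k$-cells of $X$ are copies $\widetilde{D}_{\sigma,\tau}$ of $D_\tau$ indexed by $n$-chambers $\sigma$ with $\overline{\sigma} \supset \overline{\tau}$, with attaching map determined by the boundary structure of $D_\tau$ together with the chamber-flip operator $\rho$ used in dimensions $1$ and $2$.

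The key step is the homotopy equivalence $X \simeq Y$. Following Salvetti, one embeds each cell $\widetilde{D}_{\sigma,\tau}$ into $Y \subset \mathbf{C}^n$ by placing its real part on $D_\tau \subset \mathbf{R}^n$ and adding an imaginary translation pointing into the chamber $\sigma$; this ensures the image avoids every $H_i$. A deformation retraction $Y \to X$ is then constructed by sending $z = x + \sqrt{-1}\,y$ to the cell $\widetilde{D}_{\sigma,\tau}$ where $\tau$ is the real chamber whose closure contains $x$ and $\sigma$ is the $n$-chamber determined by the combinatorial rule that $\sigma$ lies on the same side of each hyperplane $H_i$ as the ray $x + ty$ for small $t > 0$.

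The hardest part will be verifying that these local prescriptions glue into a continuous deformation retraction: one must check compatibility of the chamber assignment across strata of different dimensions, and that the attaching maps of the higher cells are consistent with the retraction. This is precisely the content of Salvetti's original theorem, which I would cite from \cite{S} rather than redo. Once $X \simeq Y$ is in hand, the conclusion is immediate: the cellular approximation theorem yields $\pi_1(X_2) \xrightarrow{\sim} \pi_1(X) \xrightarrow{\sim} \pi_1(Y)$, since attaching cells of dimension $\geq 3$ neither kills existing loops nor creates new ones. The final assertion about generators and relations is then just the van Kampen presentation of $\pi_1$ of a $2$-dimensional CW complex: each $1$-cell $\widetilde{D}_{\tau_1,\tau_2}$ contributes a generator (once a spanning tree of $0$-cells is chosen) and each $2$-cell $\widetilde{D}_{\sigma_1,\sigma_2}$ contributes the relation given by its boundary word.
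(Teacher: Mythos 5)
Your proposal is correct and takes essentially the same route as the paper, which gives no argument of its own and simply attributes the result to Salvetti \cite{S}; your added details (the construction of the higher cells, the cellular approximation step for $\pi_1(X_2)\to\pi_1(X)$, and the van Kampen presentation from $1$- and $2$-cells) are the standard facts the paper leaves implicit. The only cosmetic slip is that the proposition speaks of the fundamental \emph{groupoid}, for which no spanning tree needs to be chosen.
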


\section{$F_C$-hyperplane arrangement}
\subsection{The arrangement $\Cal H_n$}
For an element $\epsilon=(\epsilon_1,\dots, \epsilon_n)\in \{-1,1\}^n$,
we define a hyperplane $H_{\epsilon}$ by
$$
H_{\epsilon}:\epsilon_1x_1+\cdots +\epsilon_nx_n=1.
$$
We define $n$-dimensional $F_C$-arrangement $\Cal H_n$ by the union of the set of hyperplanes $\{H_\epsilon\}$
$(\epsilon\in \{-1,1\}^n)$ and that of coordinate hyperplanes 
$$
L_i:x_i=0, \quad (i=1, \dots, n).
$$
The following proposition is used to classify $(n-2)$-chambers in $\Cal H_n$.
\begin{proposition}
\label{codimension 2 linear space}
\begin{enumerate}
\item
Let $\epsilon,\epsilon'$ be elements in $\{-1,1\}^n$ 
such that $\#\{i\mid \epsilon_i\neq \epsilon'_i\}\geq 2$ and set 
$$
H_{\epsilon,\epsilon'}=H_{\epsilon}\cap H_{\epsilon'}.
$$
A hyperplane in $\Cal H$ containing $H_{\epsilon,\epsilon'}$ is equal to $H_{\epsilon}$ or 
$H_{\epsilon'}$.
\item
For an element $\epsilon$ in $\{-1,1\}^n$ and an integer $i$ with $1\leq i\leq n$,
we set
$$
H_{\epsilon,i}=H_{\epsilon}\cap L_i.
$$
A hyperplane in $\Cal H$ containing $H_{\epsilon,i}$ is equal to $L_i$, $H_{\epsilon}$ or 
$H_{g^{(i)}(\epsilon)}$. Here 
\begin{equation}
\label{reflection gi}
g^{(i)}(\epsilon_1, \dots, \epsilon_n)=(\epsilon_1,\dots,\overset{i}{-\epsilon_i},\dots, \epsilon_n).
\end{equation}
\item
Let $i,j$ be distinct integers such that $1\leq i,j \leq n$ and set 
$$
H_{i,j}=L_i\cap L_j.
$$
A hyperplane in $\Cal H$ containing $H_{i,j}$ is equal to $L_i$ or $L_j$. 
\end{enumerate}
\end{proposition}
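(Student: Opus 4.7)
The plan is to exploit the following elementary fact: if $V = H_1 \cap H_2$ is a nonempty codimension-two affine subspace of $\bold C^n$ cut out by two distinct hyperplanes $H_1, H_2$ with defining affine functionals $f_1, f_2$, then any hyperplane $H$ containing $V$ has a defining functional lying in the two-dimensional span $\bold C f_1 + \bold C f_2$. In each of the three parts I parametrize the combinations $\alpha f_1 + \beta f_2$ with $(\alpha,\beta)\neq (0,0)$ and determine for which $(\alpha,\beta)$ the result is a nonzero scalar multiple of some $f_{\epsilon''} = \sum_k \epsilon''_k x_k - 1$ or of some coordinate $x_k$. This reduces each part to finite case analysis on $\{-1,1\}^n$.

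For part (1), I write
$$
\alpha f_\epsilon + \beta f_{\epsilon'} = \sum_{k=1}^n (\alpha\epsilon_k + \beta\epsilon'_k)\, x_k - (\alpha + \beta)
$$
and split on whether the constant term vanishes. If $\alpha + \beta \neq 0$, I normalize $\alpha + \beta = 1$ so that the equation has the form $f_{\epsilon''}$ with $\epsilon''_k = \alpha\epsilon_k + (1-\alpha)\epsilon'_k$; at any index $k$ where $\epsilon_k\neq \epsilon'_k$ one has $\epsilon''_k = (2\alpha-1)\epsilon_k$, and the constraint $\epsilon''_k\in\{-1,1\}$ forces $\alpha\in\{0,1\}$, giving $\epsilon'' \in\{\epsilon,\epsilon'\}$. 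If instead $\alpha + \beta = 0$, the combination is the homogeneous form $\alpha\sum_k(\epsilon_k-\epsilon'_k)x_k$; for this to be a nonzero scalar multiple of some $x_j$ one needs $\epsilon_k = \epsilon'_k$ for every $k\neq j$, contradicting the hypothesis $\#\{k\mid\epsilon_k\neq\epsilon'_k\}\geq 2$.

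Part (2) follows the same recipe. The combination $\alpha f_\epsilon + \beta x_i$ reduces to $\beta x_i$ when $\alpha = 0$, giving $L_i$; when $\alpha\neq 0$ I normalize $\alpha = 1$, and matching the coefficients of $x_j$ for $j\neq i$ forces $\epsilon''_j = \epsilon_j$, while $\epsilon''_i = \epsilon_i + \beta \in \{-1,1\}$ yields exactly $\beta = 0$ (whence $\epsilon''=\epsilon$) or $\beta = -2\epsilon_i$ (whence $\epsilon''=g^{(i)}(\epsilon)$). Part (3) is most immediate: any combination $\alpha x_i + \beta x_j$ is homogeneous, so cannot equal an inhomogeneous $f_\epsilon$, and it is proportional to some $x_k$ only for $k\in\{i,j\}$.

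I do not foresee any genuine obstacle; the entire proof is elementary bookkeeping on $\{-1,1\}^n$. The one point meriting attention in part (1) is that the hypothesis $\#\{k\mid\epsilon_k\neq\epsilon'_k\}\geq 2$ is used \emph{precisely} to exclude the coordinate hyperplanes: if $\epsilon,\epsilon'$ differed in a single coordinate $j$, then $H_{\epsilon,\epsilon'}\subset L_j$ as well and the conclusion would fail. One also implicitly assumes $\epsilon'\neq -\epsilon$, since otherwise $H_\epsilon$ and $H_{\epsilon'}$ are parallel and $H_{\epsilon,\epsilon'}=\emptyset$, rendering the containment condition vacuous.
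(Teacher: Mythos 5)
Your proof is correct and complete: the reduction to the two-dimensional space of affine functionals vanishing on a nonempty codimension-two intersection, followed by the coefficient bookkeeping over $\{-1,1\}^n$, is exactly the right argument, and your observations that the hypothesis $\#\{i\mid \epsilon_i\neq\epsilon_i'\}\geq 2$ is what excludes the coordinate hyperplanes in part (1), and that the parallel case $\epsilon'=-\epsilon$ must be excluded for the statement to be non-vacuous, are both on point. There is nothing to compare against: the paper states this proposition without any proof, evidently regarding it as elementary, so your write-up supplies a verification the paper omits.
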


\subsection{Group action}
On the space $Y$, the group $\mu_2^n=\{1,-1\}^n$ acts by
$$
g:\bold C^n\to \bold C^n:(x_1, \dots, x_n)\mapsto
(g_1x_1,\dots, g_nx_n)
$$
for $g=(g_1, \dots, g_n)\in \mu_2^n$. The group $\mu_2^n$ acts on the sets $\Ch_i$.
We can choose the set of vertex $\{v_{\sigma}\}_{\sigma\in \Ch_i}$ so that they are stable under the action of
$\mu_2^n$. 
\begin{lemma}
On the topological space $X_2$, the action of the group $\mu_2^n$ on $X_2$
is cell-wise and fixed point free. 
\end{lemma}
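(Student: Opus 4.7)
The plan is to split the statement into two parts: (a) the $\mu_2^n$-action on $X_2$ is cellular, and (b) no non-identity $g\in\mu_2^n$ stabilises any cell of $X_2$. Together these give both assertions of the lemma, since a set-wise free action on the cells of a CW complex is point-wise free on the whole space.

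First I would check that $\mu_2^n$ permutes the hyperplanes of $\mathcal{H}_n$: for $g=(g_1,\dots,g_n)\in\mu_2^n$ one has $g\cdot L_i=L_i$, while $g\cdot H_\epsilon=H_{g\epsilon}$ with $(g\epsilon)_j=g_j\epsilon_j$. Consequently $g$ permutes $\Ch_i(\mathcal{H}_{n,\mathbf{R}})$ for every $i$. Combined with the equivariant choice of interior vertices $v_\sigma$ stated just before the lemma, $g$ sends dual flags to dual flags and hence each of the $0$-, $1$-, and $2$-cells of Salvetti's $2$-skeleton to a cell of the same type, which yields~(a).

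For (b) I would reduce to the action on $n$-chambers. Every cell of $X_2$ carries an $n$-chamber as part of its label: the $0$-cells are $\widetilde D_\sigma$ with $\sigma\in\Ch_n$, and the $1$- and $2$-cells $\widetilde D_{\sigma,\tau}$ have $\sigma\in\Ch_n$. Applying $g$ sends this label to $(g\sigma,g\tau)$, so if any cell is stabilised then $g\sigma=\sigma$ for some $n$-chamber $\sigma$, and it suffices to verify that every non-trivial $g$ acts freely on $\Ch_n$. The key step is a short convexity argument: assume $g\neq 1$ and $g\sigma=\sigma$, pick $x\in\sigma$, and observe $gx\in g\sigma=\sigma$; since $\sigma$ is open and convex in $\mathbf{R}^n$, the midpoint $\tfrac12(x+gx)$ lies in $\sigma$. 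Its $i$-th coordinate equals $\tfrac{1+g_i}{2}x_i$, which vanishes whenever $g_i=-1$, placing the midpoint on the coordinate hyperplane $L_i\in\mathcal{H}_n$ and contradicting $\sigma\cap L_i=\emptyset$.

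I do not anticipate a serious obstacle. The inclusion of the coordinate hyperplanes $L_i$ in $\mathcal{H}_n$ is precisely what rules out non-trivial cell stabilisers, since each $L_i$ is the fixed locus of the corresponding sign change. The only minor bookkeeping is to confirm that the equivariant choice of the $v_\sigma$ is possible; this is done by picking one vertex per $\mu_2^n$-orbit in each $\Ch_i$ and transporting by the group action, after which cellularity is automatic.
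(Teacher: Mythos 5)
Your proposal is correct and follows essentially the same route as the paper, which simply reduces the claim to freeness of the $\mu_2^n$-action on $\Ch_n$ and then deduces freeness on the sets of $0$-, $1$- and $2$-cells. You additionally supply the details the paper leaves implicit (the permutation of the hyperplanes giving cellularity, and the convexity/midpoint argument using the coordinate hyperplanes $L_i$ to rule out a stabilised chamber), all of which are sound.
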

\begin{proof}
The group acts on $\Ch_n$ freely. Therefore it acts freely on the set of $0, 1$ and $2$-cells.
\end{proof}

\subsection{Cell complex for the quotient space}
We consider topological space $\overline{X_2}=X_2/\mu_2^n$. Then
$\overline{X_2}$ is a cell complex. 
We have the following proposition.
\begin{proposition}
The natural map 
$\pi_1(\overline{X_2}) \to \pi_1(Y/\mu_2^n)$
is an isomorphism.
\end{proposition}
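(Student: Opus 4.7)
The plan is to compare the two regular (Galois) coverings $X_2 \to \overline{X_2}$ and $Y \to Y/\mu_2^n$ via their fundamental-group short exact sequences, and to conclude by the five lemma using Salvetti's isomorphism $\pi_1(X_2)\cong \pi_1(Y)$ recalled in Section~2.

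First I would verify that the $\mu_2^n$-action on $Y=Y_n$ itself is free (not merely free on the cells of $X_2$, which is what the preceding lemma gives). If $g\in \mu_2^n$ fixes some $(x_1,\dots,x_n)\in Y$, then $x_i=0$ for every index $i$ with $g_i=-1$; but the condition $\prod_k x_k\neq 0$ in the definition of $Y$ excludes each coordinate hyperplane $L_i$, so all $x_i\neq 0$ and $g$ must be trivial. Together with the previous lemma, the $\mu_2^n$-actions on $X_2$ and on $Y$ are both free and properly discontinuous, so the two quotient maps are regular covers with deck group $\mu_2^n$. Moreover $Y$ is path-connected as the complement of a real hyperplane arrangement in $\mathbb{C}^n$, and $X_2$ is path-connected via Salvetti.

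Fixing compatible basepoints and using the fact that the inclusion $X_2\hookrightarrow Y$ is $\mu_2^n$-equivariant, the covering exact sequences fit into a commutative diagram
\begin{equation*}
\begin{array}{ccccccccc}
1 & \to & \pi_1(X_2) & \to & \pi_1(\overline{X_2}) & \to & \mu_2^n & \to & 1 \\
  &     & \downarrow &     & \downarrow            &     & \parallel & & \\
1 & \to & \pi_1(Y)  & \to & \pi_1(Y/\mu_2^n) & \to & \mu_2^n & \to & 1,
\end{array}
\end{equation*}
whose left vertical arrow is the Salvetti isomorphism and whose right vertical arrow is the identity on $\mu_2^n$. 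The five lemma then shows the middle arrow --- precisely the natural map in the statement --- is an isomorphism.

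The argument is essentially formal once Salvetti's theorem is in hand: the only genuinely new input is the freeness of $\mu_2^n$ on $Y$, which presents no real obstacle. The only small bookkeeping to do is to check that the Salvetti CW structure on $X_2$ descends to an honest CW structure on $\overline{X_2}$ making the map $\overline{X_2}\to Y/\mu_2^n$ the continuous map meant in the statement, and that the chosen basepoints on $X_2$ and $Y$ can be taken so as to make the horizontal short exact sequences sit inside a single commutative diagram; both are immediate from the cellularity and freeness of the $\mu_2^n$-action.
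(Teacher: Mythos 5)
Your argument is correct: the paper states this proposition without proof, and your route --- checking that $\mu_2^n$ acts freely on $Y$ itself, then comparing the two covering-space short exact sequences via the equivariant inclusion $X_2\hookrightarrow Y$ and applying the short five lemma together with Salvetti's isomorphism --- is the standard justification the author evidently has in mind. No gaps.
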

We describe the cell complex $\overline{X_2}$ in this subsection.
We set 
$$
\bold R_{>0}=\{x\in \bold R\mid x> 0\},\quad
\bold R_{\geq 0}=\{x\in \bold R\mid x\geq 0\}.
$$
The subset of $i$-chambers in $\Ch_i$ contained in $\bold R_{\geq 0}$ is 
denoted by $\overline{\Ch}_{i}$.

The set $C_0(\overline{X})$ of $0$-cells in $\overline{X}$ is identified with 
$\{\widetilde D_\sigma\mid \sigma\in \overline{\Ch}_n\}$.
There are the following two kinds of $1$-cells in $\overline{X}$:
The image of $\widetilde D_{\sigma,\tau}$,  
$(\sigma \in \overline{\Ch}_{n}, \tau \in \overline{\Ch}_{n-1})$ such that
\begin{enumerate}
\item(type 1, non-closed one cell)
$\tau\subset H_{\epsilon}$,
$(\epsilon\in \{-1,1\}^n)$.
\item(type 2, closed one cell)
$\tau\subset L_i$,  $(1\leq i \leq n)$.
\end{enumerate}
There are three kinds of $2$-cells in $\overline{X_2}$:
The image of $\widetilde D_{\sigma,\tau}$,  
$(\sigma \in \overline{\Ch}_{n}, \tau \in \overline{\Ch}_{n-2})$ such that
\begin{enumerate}
\item(type 1, interior disc)
$\tau\subset H_{\epsilon}\cap H_{\epsilon'}$,
\item(type 2, boundary disc)
$\tau\subset H_{\epsilon}\cap L_i$,
\item(type 3, coordinate disc)
$\tau\subset L_i\cap L_j$.
\end{enumerate}

\begin{definition}
Let $\sigma$ be an element in $\overline{\Ch}_n$. 
we define height $h(v_{\sigma})$ of $v_{\sigma}=\widetilde D_{\sigma}$
by the number of hyperplanes of the form $H_{\epsilon}$ $(\epsilon \in \{-1,1\}^n)$
separating $\bold 0$ and $v_{\sigma}$. The number $h(v_{\sigma})$ is also denoted by $h(\sigma)$.
%For an element $\widetilde D_{\tau_1,\tau_2}\in C_2(\overline{X})$.
%We define its height by
%$$
%h(\widetilde D_{\tau_1,\tau_2})=\max\{h(v)\mid v\in C_0(\overline{X}), v\in \widetilde D_{\tau_1,\tau_2}\}
%$$
\end{definition}

\begin{proposition}
\begin{enumerate}
\item
A interior disc (type 1) is attached to four $1$-cells and contains four $0$-cells.
The shape of height is as follows.
\item
A boundary disc (type 2) is attached to six $1$-cells and contains three $0$-cells.
\item
A coordinate disc (type 3) is attached to two $1$-cells and contains one $0$-cells.
\end{enumerate}
\end{proposition}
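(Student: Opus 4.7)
The plan is to prove all three parts by a single template: invoke Proposition 3.1 to determine, for each type of codimension-$2$ chamber $\tau\in\overline{\Ch}_{n-2}$, the number $k$ of hyperplanes of $\Cal H_n$ containing $\tau$; use the Salvetti recipe of Section~2 to describe the boundary of $\widetilde{D}_{\sigma,\tau}$ in $X_2$ as a $2k$-gon with $2k$ vertices and $2k$ edges; and then descend to $\overline{X_2}=X_2/\mu_2^n$ by analyzing the stabilizer $\Gamma_\tau\subset\mu_2^n$ of the chamber $\tau$.

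First I would carry out the classification. Proposition~3.1 yields $k=2$ with $\{H_\epsilon,H_{\epsilon'}\}$ in type~1, $k=3$ with $\{L_i,H_\epsilon,H_{g^{(i)}(\epsilon)}\}$ in type~2, and $k=2$ with $\{L_i,L_j\}$ in type~3, so that the Salvetti boundary polygons have $4$, $6$, and $4$ sides respectively. Choosing the representative $\sigma\in\overline{\Ch}_n$ (the positive-orthant chamber), these polygons embed into $X_2$ via the two shortest paths from $v_\sigma$ to $v_{\rho_\tau(\sigma)}$, as reviewed in Section~2.

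Next I would compute $\Gamma_\tau$ for each type. Using that $s_i\in\mu_2^n$ fixes $L_i$ pointwise and acts on the $H_\epsilon$ by $H_\epsilon\mapsto H_{g^{(i)}(\epsilon)}$, together with the requirement that $g$ preserve the sign pattern of the positive orthant coordinates constant on $\tau$, I would obtain $\Gamma_\tau=\{1\}$ in type~1, $\Gamma_\tau=\{1,s_i\}$ in type~2, and $\Gamma_\tau=\{1,s_i,s_j,s_is_j\}$ in type~3. Geometrically, in a $2$-dimensional slice transversal to $\tau$ in which the $k$ hyperplanes become $k$ lines through a point, $\Gamma_\tau$ acts as a reflection subgroup of the dihedral symmetries of the resulting $2k$-gon.

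Then I would count orbits of the surrounding chambers and of the boundary edges under $\Gamma_\tau$. The $0$-cell count in $\overline{X_2}$ reads off as $2k/|\Gamma_\tau|$, giving $4$, $3$, and $1$ for the three types, matching the proposition. For the $1$-cell data, one traces the $\Gamma_\tau$-action on the $2k$ Salvetti arrows on the boundary, distinguishing arrows whose separating hyperplane $\tau'$ is fixed pointwise by $\Gamma_\tau$ (reverse-oriented on itself) from those exchanged in pairs. This yields: in type~1 no identifications, giving $4$ distinct boundary arrows; in type~3 the four edges collapse to two, producing the commutator attaching word $[M(i),M(j)]$ on one vertex; and in type~2 the hexagonal boundary polygon persists with the asserted structure.

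The main obstacle is the bookkeeping in type~2, where both the source-chamber label of each Salvetti arrow and its orientation shift under $s_i$, and one must verify carefully that the quotient boundary polygon has the claimed hexagonal combinatorics together with three vertices. Types~1 and~3 are essentially immediate from the stabilizer computation, since $\Gamma_\tau$ is either trivial or acts transitively on the surrounding chambers.
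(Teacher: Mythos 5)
Your argument is correct and it fills in what the paper leaves unproved: the proposition is stated with no proof at all, its counts being justified only implicitly by Proposition 3.1 and the local pictures used later for the type 1, 2, 3 relations, and your route --- reading off the number $k$ of hyperplanes through $\tau$ from Proposition 3.1, forming the Salvetti $2k$-gon, and quotienting by the stabilizer $\{1\}$, $\{1,s_i\}$, $\{1,s_i,s_j,s_is_j\}$ in the three cases --- is exactly the intended one. The only step worth making explicit is that identifications among the boundary cells of a fixed $2k$-gon can come only from elements of $\Gamma_\tau$: each surrounding $n$-chamber lies in a single open orthant and each $\mu_2^n$-orbit of $n$-chambers meets $\overline{\Ch}_n$ exactly once, so any $g$ matching two boundary cells of the same disc must permute the orthants adjacent to $\tau$ and hence lie in $\Gamma_\tau$; with that remark your orbit counts $4$, $3$, $1$ for vertices and $4$, $6$, $2$ for edges are fully justified.
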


We define spanning complex which is a slight generalization of spanning tree.
A $1$-cell $\widetilde D_{\sigma,\tau}$ is called a spanning $1$-cell if it is type 1
and $h(\sigma)+1=h(\rho_{\tau}(\sigma))$, i.e. $\rho_{\tau}(\sigma)$
is farer from the origin than $\sigma$.
A $2$-cell $\widetilde D_{\sigma,\tau}$ is called a spanning $2$-cell if it is type 1
and $h(\sigma)$ is the smallest among vertices contained in $D_{\tau}$.
The union of spanning $1$ and $2$-cells forms a sub cell complex $\Cal S$ of $\overline{X_2}$.
The complex $\Cal S$ is called the spanning complex of $\overline{X_2}$.
\begin{lemma}
The spanning complex $\Cal S$ is simply connected. 
\end{lemma}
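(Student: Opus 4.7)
The plan is to filter $\Cal S$ by height. For $k\geq 0$ let $\Cal S^{(k)}\subset\Cal S$ denote the subcomplex consisting of those cells all of whose vertices have height $\leq k$; then $\Cal S=\bigcup_k \Cal S^{(k)}$, and it suffices to prove by induction on $k$ that each $\Cal S^{(k)}$ is simply connected. The base case $\Cal S^{(0)}$ is a single $0$-cell, the vertex of the unique chamber containing $\bold 0$. For the inductive step, assume $\Cal S^{(k-1)}$ is simply connected and fix $\sigma\in\overline{\Ch}_n$ with $h(\sigma)=k$. Taking an interior point $p$ of $\sigma$ with all coordinates strictly positive, the straight segment from $p$ to $\bold 0$ lies in $\bold R^n_{\geq 0}$ and meets $\bigcup_i L_i$ only at $\bold 0$, so its first crossing is a wall of $\sigma$ of the form $H_\epsilon$ with $\epsilon$ separating $\sigma$ from $\bold 0$; this yields a spanning $1$-cell into $\sigma$ from a chamber of height $k-1$. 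For each such $\sigma$ choose one incoming spanning $1$-cell $e_\sigma\colon\sigma_{\mathrm{pred}}\to\sigma$, with separating wall $H_{\epsilon'}$, and adjoin all the pairs $(\sigma,e_\sigma)$ to $\Cal S^{(k-1)}$; the resulting complex $\Cal T^{(k)}$ is homotopy equivalent to $\Cal S^{(k-1)}$ with a wedge of intervals attached, hence still simply connected.

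It remains to attach the non-chosen spanning $1$-cells $e\colon\sigma_0\to\sigma$ into height-$k$ chambers and the spanning $2$-cells whose maximum-vertex height is $k$. The key claim is that for such a non-chosen $e$ with separating wall $H_\epsilon$, the labels $\epsilon$ and $\epsilon'$ differ in at least two coordinates. Suppose instead $\epsilon'=g^{(i)}(\epsilon)$. By Proposition \ref{codimension 2 linear space}(2) the $(n-2)$-subspace $H_\epsilon\cap H_{\epsilon'}$ is contained in $L_i$, and in a neighborhood of this stratum inside $\{x_i\geq 0\}$ the three hyperplanes $L_i, H_\epsilon, H_{\epsilon'}$ cut out exactly three chambers, of heights $h, h+1, h+2$ for some $h$. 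The only one among them having both $H_\epsilon$ and $H_{\epsilon'}$ as walls is the middle chamber, and from it crossing $H_\epsilon$ leads to the chamber of height $h$ while crossing $H_{\epsilon'}$ leads to the chamber of height $h+2$; hence $\sigma$, which by hypothesis has both $H_\epsilon$ and $H_{\epsilon'}$ as height-decreasing walls, cannot coincide with this middle chamber, a contradiction.

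Granted the claim, Proposition \ref{codimension 2 linear space}(1) shows that the $(n-2)$-chamber $\tau\subset\overline{\sigma}\cap H_\epsilon\cap H_{\epsilon'}$ is contained only in $H_\epsilon$ and $H_{\epsilon'}$ among the hyperplanes of $\Cal H_n$, so the type-$1$ spanning $2$-cell $F(e)$ around $\tau$ is defined, with vertex set $\{\sigma,\sigma_0,\sigma_{\mathrm{pred}},\rho_\tau(\sigma)\}$ of heights $k,k-1,k-1,k-2$. All four boundary edges of $F(e)$ are spanning $1$-cells, and the two joining $\rho_\tau(\sigma)$ to $\sigma_0$ and to $\sigma_{\mathrm{pred}}$ already lie in $\Cal S^{(k-1)}$. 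The relation $\partial F(e)=1$ therefore expresses the new generator $e$ in terms of $e_\sigma$ and edges of $\Cal S^{(k-1)}$, so simultaneously adjoining $e$ and $F(e)$ leaves $\pi_1$ unchanged; any further spanning $2$-cells in $\Cal S^{(k)}\setminus\Cal T^{(k)}$ only impose additional relations on the already trivial group. This completes the induction. The main obstacle is the two-coordinate claim, which pinpoints the local geometric reason that the Salvetti square $F(e)$ exists exactly where it is needed to close up the loops introduced at level $k$.
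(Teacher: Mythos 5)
Your strategy --- induction on the height filtration, building a spanning tree level by level and then killing each remaining spanning $1$-cell with a spanning $2$-cell --- is genuinely different from the paper's, which simply identifies $\Cal S$ with the $2$-skeleton of the dual cell complex of the convex region $\bold R^n_{>0}$ subdivided by the $H_\epsilon$'s, and deduces simple connectivity from contractibility of that region. Your preliminary steps are sound: there is a unique height-$0$ chamber, every chamber of positive height has a height-decreasing wall of type $H_\epsilon$, and the conclusion of your ``key claim'' is correct (it is most cleanly seen globally: if $\epsilon'=g^{(i)}(\epsilon)$ then one of the two inequalities $\langle\epsilon,x\rangle>1$, $\langle\epsilon',x\rangle>1$ strictly implies the other on $\{x_i>0\}$, so the two far half-spaces are nested and no chamber of the open orthant can have facets on both hyperplanes; your purely local version leaves open why $\sigma$ must sit near the stratum at all, but this is repairable).

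The genuine gap is in the last paragraph. You introduce ``the $(n-2)$-chamber $\tau\subset\overline{\sigma}\cap H_\epsilon\cap H_{\epsilon'}$'' as if its existence were automatic, but Proposition \ref{codimension 2 linear space}(1) only identifies which hyperplanes contain the linear stratum $H_\epsilon\cap H_{\epsilon'}$; it does not show that $\overline{\sigma}$ meets that stratum in dimension $n-2$, i.e.\ that the two height-decreasing facets of $\sigma$ lying on $H_\epsilon$ and $H_{\epsilon'}$ are adjacent along a ridge. Two facets of a convex chamber need not be adjacent (opposite edges of a quadrilateral chamber in the plane already illustrate this), and without such a ridge there is no single Salvetti square relating $e$ to $e_\sigma$, so the step ``adjoining $e$ and $F(e)$ leaves $\pi_1$ unchanged'' is unsupported. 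What your induction actually needs, and what must be proved, is that the union of the height-decreasing facets of $\sigma$ is connected through $(n-2)$-dimensional ridges (for instance because it is the part of $\partial\overline{\sigma}$ visible from the origin, hence a ball under radial projection), after which $e$ is expressed in terms of $e_\sigma$ by a chain of spanning $2$-cells rather than one. This adjacency issue is exactly the work that the paper's one-line identification with the dual $2$-skeleton of a convex set avoids.
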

\begin{proof}
It is identified with a $2$-skeleton of the dual cell complex of $\bold R^n_{>0}$
which is simply connected.
\end{proof}

We define $\overline{X_2}^{(s)}$ by obtaining contracting a subset $\Cal S\subset \overline{X_2}$ to a point $s$.
By the above proposition, we have
\begin{proposition}
The natural map 
$$
\pi_1(\overline{X_2}) \to \pi_1(\overline{X_2}^{(s)},s)
$$
is an isomorphism.
\end{proposition}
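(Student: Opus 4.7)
The plan is to verify that collapsing $\Cal S$ to a point does not change the fundamental group, which reduces to the classical fact: if $A$ is a simply connected CW subcomplex of a CW complex $X$, then the quotient map $q\colon X\to X/A$ induces an isomorphism on $\pi_1$. Applying this with $X=\overline{X_2}$ and $A=\Cal S$, using that $\pi_1(\Cal S)=1$ from the preceding lemma, the proposition follows at once.

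To establish the general fact, I would first replace $\overline{X_2}/\Cal S$ by the homotopy equivalent mapping cone $M=\overline{X_2}\cup_{\Cal S}C\Cal S$. This replacement is legitimate because $(\overline{X_2},\Cal S)$ is a CW pair, so the inclusion $\Cal S\hookrightarrow \overline{X_2}$ is a cofibration, and the cone $C\Cal S$ is contractible; collapsing $C\Cal S$ to a point then gives a homotopy equivalence $M\simeq \overline{X_2}/\Cal S$ under which the inclusion $\overline{X_2}\hookrightarrow M$ corresponds to the quotient map $q$. Next, I would invoke the Seifert--van Kampen theorem for the open cover $M=U\cup V$, where $U$ is an open neighborhood of $\overline{X_2}\subset M$ obtained by removing the apex of the cone (and deformation retracts onto $\overline{X_2}$), and $V\simeq C\Cal S$ is the open cone, which is contractible. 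Their intersection deformation retracts onto $\Cal S$, and since $\pi_1(\Cal S)=1$, van Kampen yields
\[
\pi_1(M)\;\cong\;\pi_1(\overline{X_2})*_{\pi_1(\Cal S)}\pi_1(C\Cal S)\;=\;\pi_1(\overline{X_2})*_{1}1\;\cong\;\pi_1(\overline{X_2}),
\]
with the isomorphism induced by the inclusion $\overline{X_2}\hookrightarrow M$, hence by $q_*$.

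The argument is entirely standard, so I do not anticipate any real obstacle. The crucial geometric input, namely $\pi_1(\Cal S)=1$, has already been verified in the preceding lemma, and everything else is a routine application of van Kampen to a good pair. If one wanted to bypass the cone construction entirely, one could alternatively argue directly by lifting loops from $\overline{X_2}^{(s)}$ back to paths in $\overline{X_2}$ with endpoints in $\Cal S$: surjectivity of $q_*$ is clear from such lifting, and injectivity follows because any two lifts of the same loop differ by a path in $\Cal S$, which is null-homotopic in $\overline{X_2}$ by the preceding lemma.
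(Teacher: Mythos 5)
Your proposal is correct and follows essentially the same route as the paper, which simply invokes the preceding lemma that the spanning complex $\Cal S$ is simply connected together with the standard fact that collapsing a simply connected subcomplex of a CW pair does not change $\pi_1$. You merely spell out the routine mapping-cone/van Kampen justification of that standard fact, which the paper leaves implicit.
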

\begin{definition}
A $1$-cell $\widetilde D_{\sigma,\tau}$ in $\overline{X_2}$ is called a generator if it is
\begin{enumerate}
\item
type 1 and not spanning, or
\item
type 2.
\end{enumerate}
\end{definition}
A generator defines a closed path in $\overline{X_2}^{(s)}$.
Then the set of generator generates the group $\pi_1(\overline{X_2}^{(s)})$.

\subsection{Relations for type 1 and type 2}
\subsubsection{Type 1 relation}
First, we consider a type 1 $2$-cells 
$\widetilde D_{\sigma,\tau}$ in $\overline{X}$ with 
$\tau\subset H_{\epsilon}\cap H_{\epsilon'}$.
\begin{figure}[htb]
\includegraphics[width=5cm]{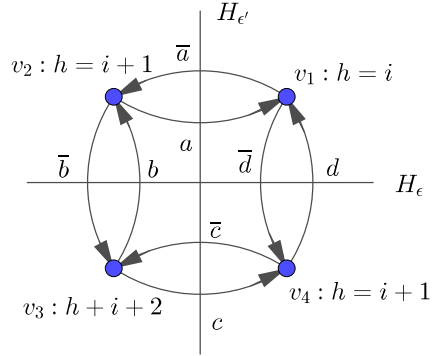}
\caption{Type 1 relation}
\label{type 1 rel}
\end{figure}
The arrows $\overline{a}, \overline{b}, \overline{c}$ and $\overline{d}$
are spanning $1$-cells and $a,b,c$ and $d$ define elements
in $\pi_1(\overline{X_2}^{(s)},s)$.
Their relations are given as
$$
a=c,\quad b=d,\quad ab=ba.
$$

\subsubsection{Type 2 relation}
Next we consider a type 2 $2$-cells $\widetilde D_{\sigma,\tau}$ in $\overline{X}$
as in Figure \ref{type 2 rel}.
\begin{figure}[htb]
\includegraphics[width=6cm]{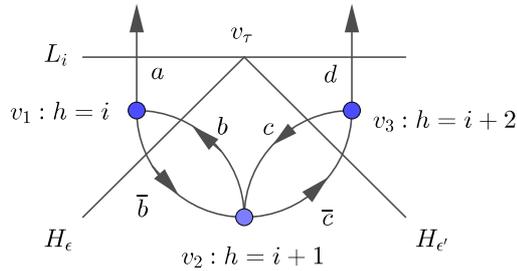}
\caption{Type 2 relation}
\label{type 2 rel}
\end{figure}
The arrows $\overline{b}$ and $\overline{c}$ are spanning $1$-cells and reduced to one point in $\overline{X_2}^{(s)}$.
We consider a $(n-2)$-chamber $\tau$ contained in $L_i$.
The relations beginning from $v_1=v_{\sigma_1}, v_2=v_{\sigma_2}$ and 
$v_3=v_{\sigma_3}$ are the following:
\begin{align*}
&\partial\widetilde D_{\sigma_1,\tau}:a=d, \\
&\partial\widetilde D_{\sigma_2,\tau}:ba=dc, \\
&\partial\widetilde D_{\sigma_3,\tau}:cba=dcb. 
\end{align*}
We can easily check the following proposition.
\begin{proposition}
The above relations are equivalent to  
\begin{equation}
\label{type 2 braid relation}
d=a, \quad c=a^{-1}ba, \quad (ab)^2=(ba)^2.
\end{equation}
\end{proposition}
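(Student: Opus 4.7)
The plan is purely algebraic: substitute the first relation into the second, the first two into the third, and show the result collapses to the braid-type relation $(ab)^2=(ba)^2$. No topology intervenes at this stage, so the whole argument is bookkeeping in the free group on $a,b,c,d$.

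More precisely, I would first take the relation $\partial\widetilde D_{\sigma_1,\tau}:a=d$ as given, which is already one of the three target equations. Substituting $d=a$ into $\partial\widetilde D_{\sigma_2,\tau}:ba=dc$ yields $ba=ac$, equivalent to $c=a^{-1}ba$, which is the second target equation. Then I would plug both $d=a$ and $c=a^{-1}ba$ into the third relation $\partial\widetilde D_{\sigma_3,\tau}:cba=dcb$. The left-hand side becomes
\[
cba=(a^{-1}ba)(ba)=a^{-1}(ba)^2,
\]
and the right-hand side becomes
\[
dcb=a(a^{-1}ba)b=(ba)b=bab.
\]
So the third relation is equivalent to $a^{-1}(ba)^2=bab$, i.e.\ $(ba)^2=a\,bab=(ab)^2$, which is exactly the third target equation.

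For the converse direction I would just read the same computations backwards: starting from $d=a$, $c=a^{-1}ba$, and $(ab)^2=(ba)^2$, the identity $cba=a^{-1}(ba)^2=a^{-1}(ab)^2=bab=dcb$ recovers the third boundary relation, while $ba=a(a^{-1}ba)=ac=dc$ recovers the second. Thus the two systems are equivalent.

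The only step that needs any care is the rewriting of $cba$ and $dcb$ after the substitutions, and in particular noticing that moving an $a$ across the identity $(ba)^2=(ab)^2$ is exactly what matches the two sides of $\partial\widetilde D_{\sigma_3,\tau}$; this is the \emph{sole} nontrivial point in the proof. Since $a,b,c,d$ are independent generators in the fundamental groupoid before imposing the boundary relations, no further group-theoretic subtleties arise, and the proposition follows by a one-line verification in each direction.
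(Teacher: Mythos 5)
Your verification is correct: the substitutions $d=a$ and $c=a^{-1}ba$ reduce the third boundary relation $cba=dcb$ to $a^{-1}(ba)^2=bab$, i.e.\ $(ab)^2=(ba)^2$, and the argument reverses without difficulty. The paper itself offers no proof (it states only that the equivalence is easily checked), and your computation is exactly the routine verification it has in mind, so there is nothing to add.
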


We consider the above situation and set 
$\epsilon=(\epsilon_1, \dots, \epsilon_n)$ and
$\epsilon'=(\epsilon'_1, \dots, \epsilon'_n)$. 
Here $\epsilon'=g^{(i)}(\epsilon)$ where $g^{(i)}$ is defined as (\ref{reflection gi}).
By the definition of height, $v_1$ is the closest vertex from the origin.
Therefore we have $\epsilon_j=\epsilon'_j$ if $j\neq i$
and $\epsilon_i=1$ and $\epsilon'_i=-1$.

\subsection{Definition of $\gamma_i$ and their relations}
In this subsection, we define $\gamma_i$ and study their relations.
\begin{definition}
We define 
$\gamma_i=\widetilde D_{\sigma_0,\tau_i}$
for $i=0,1,\dots,n$ where
\begin{align*} 
& \sigma_0=\{(x_i)\in \bold R^n\mid
x_i>0 \quad (0\leq i\leq n), \quad  \sum x_i<1\},
\\
& \tau_0=\{(x_i)\in \bold R^n\mid
x_i>0 \quad (0\leq i\leq n), \quad  \sum x_i=1\},
\\
& \tau_i=\{(x_i)\in \bold R^n\mid
x_i>0 \quad (0\leq j\leq n, j\neq i), \quad  \sum x_i<1,  \quad x_i=0\}.
\end{align*}
Actually $\sigma_0$ is a chamber since if $x=(x_1, \dots, x_n)\in \sigma_0$
and $\epsilon\neq (1, \dots, 1)$ then
$$
\sum_i \epsilon x_i <\sum_i x_i <1,
$$
and the point $x$ is not contained in $H_{\epsilon,\bold R}$.
\end{definition}
By previous subsection, we have
\begin{align}
\label{fundamental 1}
&[\gamma_i,\gamma_j]=1,  \quad (1\leq i,j\leq n),
\\
\nonumber
&(\gamma_0\gamma_i)^2=(\gamma_i\gamma_0)^2, \quad (1\leq i\leq n).
\end{align}
\begin{proposition}
\label{equalities in pi 1}
\begin{enumerate}
\item
Under the notation of figure 1, we have 
$a=\gamma_i$ in $\pi_1(\overline{X_2}^{(s)})$.
\item
Let $\tau_1$ and $\tau_2$ be two elements in $\overline{\Ch}_{n-1}$ 
contained a common hyperplane $H_{\epsilon,\bold R}$.
Suppose that $1$-cells
$\widetilde D_{\sigma_1,\tau_1}$ and
$\widetilde D_{\sigma_2,\tau_2}$ are generators.
Then the paths obtained by them are
homotopic to each other.
These paths defines a common element in $\pi_1(\overline{X_2}^{(s)},s)$ which is denoted by
$\gamma_{\epsilon}$.
\item
For $\epsilon \in \{-1,1\}^n, \epsilon\neq (-1, \dots, -1)$, we set
$$
S(\epsilon)=\{i \mid 1\leq i\leq n, \epsilon_i=-1 \},
\quad m_{\epsilon}=\prod_{i\in S(\epsilon)}\gamma_i.
$$
Then we have 
\begin{equation}
\label{expression of inner path} 
\gamma_{\epsilon}={m_\epsilon}^{-1}\gamma_0 m_\epsilon.
\end{equation}

\end{enumerate}
\end{proposition}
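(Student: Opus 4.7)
The plan is to prove the three parts in sequence; part (3) follows quickly from (1) and (2) by a single application of the braid-type identity derived from a type 2 boundary disc.

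For part (1), the goal is to identify the type 2 arrow $a$ from the type 2 relation figure with $\gamma_i=\widetilde D_{\sigma_0,\tau_i}$. Both $a$ and $\gamma_i$ are type 2 $1$-cells crossing $L_i$, i.e.\ loops in $\overline{X_2}$. For each $(n-1)$-chamber $\tau'\in\overline{\Ch}_{n-1}$ with $\tau'\subset L_i$, there is a unique adjacent $n$-chamber $\sigma(\tau')\in\overline{\Ch}_n$ on the $x_i>0$ side, producing such a loop $\widetilde D_{\sigma(\tau'),\tau'}$. I would walk from the $(n-1)$-chamber corresponding to $a$ to $\tau_i$ along codim-$1$ adjacencies of the restricted arrangement on $L_i\cap\bold R^n_{>0}$. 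Each such $(n-2)$-adjacency occurs across a chamber of the form $L_i\cap H_\eta$, since the $L_i\cap L_j$ chambers lie on the coordinate boundary $x_j=0$, outside the open region. At each such crossing, the type 2 boundary-disc relation $a=d$ identifies the two type 2 loops on either side. Standard connectedness of a chamber complex of a hyperplane arrangement in a convex open region then forces all these loops to define the same element of $\pi_1(\overline{X_2}^{(s)})$, so in particular $a=\gamma_i$.

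For part (2), an analogous walk is carried out one dimension up, inside $H_\epsilon\cap\bold R^n_{>0}$. Any $(n-1)$-chamber $\tau\in\overline{\Ch}_{n-1}$ with $\tau\subset H_{\epsilon,\bold R}$ lies in $H_\epsilon\cap\bold R^n_{>0}$, because an open chamber cannot meet any $L_i$ and so all its coordinates are strictly positive. Within this open region, the only $(n-2)$-chambers that arise as codim-$1$ faces are of the form $H_\epsilon\cap H_{\epsilon'}$ with $\#\{j:\epsilon_j\neq\epsilon'_j\}\geq 2$ (a single differing coordinate would give $H_\epsilon\cap H_{\epsilon'}=L_i$, which does not meet $\bold R^n_{>0}$). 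At each such $(n-2)$-chamber the type 1 interior-disc relation $a=c$ identifies the two type 1 non-spanning generators crossing $H_\epsilon$ on opposite sides. Connectedness of the restricted chamber complex then shows that every type 1 generator crossing $H_\epsilon$ represents a single well-defined element of $\pi_1(\overline{X_2}^{(s)})$, which we denote $\gamma_\epsilon$.

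Part (3) is proved by induction on $|S(\epsilon)|$. The base case $S(\epsilon)=\emptyset$ gives $\epsilon=(1,\dots,1)$, so $m_\epsilon=1$ and $\gamma_\epsilon=\gamma_0$ by definition. For the inductive step, pick $i\in S(\epsilon)$, let $\epsilon'=g^{(i)}(\epsilon)$, and observe $S(\epsilon)=S(\epsilon')\sqcup\{i\}$ and $m_\epsilon=m_{\epsilon'}\gamma_i$. Choose any $(n-2)$-chamber $\tau\subset H_\epsilon\cap L_i=H_{\epsilon'}\cap L_i$ and apply the type 2 relation $c=a^{-1}ba$ at $\tau$: by part (1), $a=\gamma_i$; by part (2), the type 1 generator $b$ (at the innermost vertex $v_1$) crossing $H_{\epsilon'}$ equals $\gamma_{\epsilon'}$, and the type 1 generator $c$ crossing $H_\epsilon$ equals $\gamma_\epsilon$. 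Hence $\gamma_\epsilon=\gamma_i^{-1}\gamma_{\epsilon'}\gamma_i$, and combining with the inductive hypothesis $\gamma_{\epsilon'}=m_{\epsilon'}^{-1}\gamma_0 m_{\epsilon'}$ yields $\gamma_\epsilon=\gamma_i^{-1}m_{\epsilon'}^{-1}\gamma_0 m_{\epsilon'}\gamma_i=m_\epsilon^{-1}\gamma_0 m_\epsilon$.

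The main obstacle is the combinatorial connectedness step in parts (1) and (2): one must verify that the chamber graph of the restricted arrangement inside each codim-$1$ subspace, intersected with the open positive orthant, is connected through exactly those $(n-2)$-adjacencies that yield an identification relation (rather than conjugation or commutation). Once that bookkeeping is in place, the identifications propagate freely, and the inductive step in part (3) reduces to a direct reading of one type 2 boundary-disc relation.
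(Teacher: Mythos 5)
Your proposal follows essentially the same route as the paper: parts (1) and (2) are the iterative application of the identification relations $a=d$ (type 2 discs) and $a=c$ (type 1 discs) across adjacent chambers in the positive orthant, and part (3) is the paper's chain $\epsilon^{(0)}=(1,\dots,1),\ \epsilon^{(j)}=g^{(i_j)}(\epsilon^{(j-1)})$ unrolled as an induction on $\#S(\epsilon)$, reading off $c=a^{-1}ba$ at each flip. The one point you take for granted that the paper proves as a separate lemma is that the cells you invoke actually exist in $\overline{X_2}$, namely that $H_{\epsilon}\cap\bold R^n_{>0}\neq\emptyset$ and $H_{\epsilon}\cap L_i\cap\{x_l>0,\ l\neq i\}\neq\emptyset$ at each step; this does hold, because $\epsilon\neq(-1,\dots,-1)$ forces some $\epsilon_j=+1$ with $j\neq i$ (as $\epsilon_i=-1$), so the verification should be recorded but causes no failure.
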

\begin{proof}
(1) We use the first relation (\ref{type 2 braid relation})
iteratively and have the statement.
(2) This follows from the relations obtained by a type 1 $2$-cells.
(3) Let $\epsilon$ be an element in $\{-1,1\}^n$ and $\epsilon\neq (-1, \dots, -1)$. 
We set $S(\epsilon)=\{i_1,\dots, i_k\}$.
We consider a chain $\epsilon^{(0)},\dots, \epsilon^{(k)} \in \{-1,1\}^n$ defined by
$$
\epsilon^{(0)}=(1,\dots, 1), \epsilon^{(1)}=g^{(i_1)}(\epsilon^{(0)}), 
\epsilon^{(2)}=g^{(i_2)}(\epsilon^{(1)}),
\dots, \epsilon^{(k)}=g^{(i_k)}(\epsilon^{(k-1)}).
$$
Then $\epsilon^{(k)}=\epsilon$.
\begin{lemma}
\label{desending ind lemme}
$H_{\epsilon^{(j)}}\cap \bold R^n_{>0}\neq \emptyset$, $(j=0, \dots, k)$
and 
$$
H_{\epsilon^{(j)}}\cap H_{\epsilon^{(j-1)}}\cap \{(x_l)\in \bold R^n\mid
x_l>0 \text{ for }l\neq i_j \}\neq \emptyset \quad (i=1, \dots, k).
$$
\end{lemma}
\begin{proof}[Proof of Lemma \ref{desending ind lemme}]
By descending induction, it is enough to prove the lemma for $j=k$.
We set
$\epsilon^{(k)}=(\epsilon_1, \dots, \epsilon_n)$.
First statement holds since $\epsilon_j=1$ for some $j$.
We prove the second statement. 
Since $\epsilon_{i_k}=-1$,
there exsists $j\neq i_k$ such that $\epsilon_j=1$.
Therefore the equation
$$
x_{i_k}=0,
\epsilon_1 x_1+\cdots +\widehat{\epsilon_{i_k}x_{i_k}}+\cdots +\epsilon_nx_n=1.
$$
has a solution satisfying $x_l>0$ for $l\neq i_j$.
\end{proof}
By applying the second relation in (\ref{type 2 braid relation}) iteratively, we have statement (3).
\end{proof}
Using Proposition \ref{equalities in pi 1} (3) and the relation of type 1, we have the following theorem.
\begin{theorem} 
\label{satisfy type 2 relation in pi1}
We have
\begin{equation}
\label{fundamental 2}
[{m_\epsilon}^{-1}\gamma_{0}{m_\epsilon},
{m_{\epsilon'}}^{-1}\gamma_{0}{m_{\epsilon'}}]=1.
\end{equation}
for 
$\epsilon, \epsilon' \in \{-1,1\}^n$ and
$
H_{\epsilon,\bold R}\cap H_{\epsilon',\bold R}\cap R_{>0}^n\neq \emptyset.
$
\end{theorem}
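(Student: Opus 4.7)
The plan is to derive the relation directly from a single type 1 $2$-cell lying over an appropriate $(n-2)$-chamber in $H_\epsilon \cap H_{\epsilon'}$, then translate from the intermediate generators $\gamma_\epsilon$ to their expressions as conjugates of $\gamma_0$.

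First I would check that the hypothesis $H_{\epsilon,\mathbb{R}} \cap H_{\epsilon',\mathbb{R}} \cap \mathbb{R}^n_{>0} \neq \emptyset$ forces $\#\{i \mid \epsilon_i \neq \epsilon'_i\} \geq 2$: indeed, if $\epsilon$ and $\epsilon'$ agree except in coordinate $i$, then subtracting the defining equations of $H_\epsilon$ and $H_{\epsilon'}$ gives $2\epsilon_i x_i = 0$, forcing $x_i = 0$, which is incompatible with lying in $\mathbb{R}^n_{>0}$. Hence Proposition \ref{codimension 2 linear space}(1) applies, so the only hyperplanes of $\mathcal{H}_n$ containing $H_\epsilon \cap H_{\epsilon'}$ are $H_\epsilon$ and $H_{\epsilon'}$ themselves.

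Next I would pick an $(n-2)$-chamber $\tau \in \overline{\Ch}_{n-2}$ with $\tau \subset H_\epsilon \cap H_{\epsilon'}$ meeting $\mathbb{R}^n_{>0}$ (possible by hypothesis), and an $n$-chamber $\sigma \in \overline{\Ch}_n$ with $\overline{\sigma} \supset \overline{\tau}$. By the preceding step, the $2$-cell $\widetilde D_{\sigma,\tau}$ is of type 1 (interior disc), and its four bounding $1$-cells alternately cross $H_\epsilon$ and $H_{\epsilon'}$. Applying the type 1 relation from Subsection 3.5.1 to this disc, the two opposite edges crossing $H_\epsilon$ give the same element $a = c$, the two opposite edges crossing $H_{\epsilon'}$ give $b = d$, and $ab = ba$. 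By Proposition \ref{equalities in pi 1}(2), the well-defined elements are precisely $a = c = \gamma_\epsilon$ and $b = d = \gamma_{\epsilon'}$, so that
\begin{equation*}
[\gamma_\epsilon, \gamma_{\epsilon'}] = 1 \quad \text{in } \pi_1(\overline{X_2}^{(s)}, s).
\end{equation*}

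Finally, I would substitute the conjugation formula $\gamma_\epsilon = m_\epsilon^{-1} \gamma_0 m_\epsilon$ provided by Proposition \ref{equalities in pi 1}(3), and similarly for $\epsilon'$, to obtain the claimed identity \eqref{fundamental 2}. The main subtlety is purely bookkeeping: one must match the geometric labels on the four edges of the type 1 disc with the algebraic symbols $\gamma_\epsilon, \gamma_{\epsilon'}$ using Proposition \ref{equalities in pi 1}(2), so that independence from the choice of the particular chamber $\tau$ and adjacent $\sigma$ is guaranteed. Once that identification is made, the proof is essentially an immediate consequence of the type 1 relation combined with the conjugate expression \eqref{expression of inner path}.
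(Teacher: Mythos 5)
Your proposal is correct and follows the same route as the paper, which derives the theorem precisely by combining the type 1 relation $ab=ba$ on an interior disc over $H_{\epsilon,\bold R}\cap H_{\epsilon',\bold R}\cap \bold R^n_{>0}$ with the conjugation formula of Proposition \ref{equalities in pi 1}(3); your extra verification that the hypothesis forces $\epsilon$ and $\epsilon'$ to differ in at least two coordinates (so the disc really is type 1) is a useful detail the paper leaves implicit.
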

\section{Fundamental relation}
\subsection{Main theorem}
In this section, we prove the following theorem.
\begin{theorem}
\label{main theorem}
The relations 
(\ref{fundamental 1}) and
(\ref{fundamental 2}) are fundamental relations 
for $\pi_1(\overline{X_2}^{(s)},s)$
with generators $\gamma_0$ and $\gamma_i$ $(1\leq i\leq n)$.
\end{theorem}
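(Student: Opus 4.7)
The plan is to carry out a Tietze reduction of the Salvetti presentation of $\pi_1(\overline{X_2}^{(s)},s)$ from Section 2. In that presentation, the generators are the non-spanning $1$-cells of $\overline{X_2}$ and the relators are the three types of $2$-cells listed in Section 3.3. The goal is to show that, after expressing every non-spanning $1$-cell in terms of $\gamma_0,\ldots,\gamma_n$ and rewriting every $2$-cell relation accordingly, the only surviving relations are (\ref{fundamental 1}) and (\ref{fundamental 2}). Generation of $\pi_1$ by $\gamma_0,\ldots,\gamma_n$ is already delivered by Proposition \ref{equalities in pi 1}: a type $2$ generator on $L_i$ equals $\gamma_i$ by part (1); a type $1$ non-spanning generator on $H_\epsilon$ equals $\gamma_\epsilon = m_\epsilon^{-1}\gamma_0 m_\epsilon$ by parts (2) and (3), and so lies in the subgroup generated by $\gamma_0,\ldots,\gamma_n$.

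Next I would dispose of the three families of $2$-cell relations one at a time. For a type $1$ interior disc at $H_\epsilon\cap H_{\epsilon'}$, the discussion around Figure \ref{type 1 rel} shows the relation is $\gamma_\epsilon\gamma_{\epsilon'}=\gamma_{\epsilon'}\gamma_\epsilon$; substituting $\gamma_\epsilon=m_\epsilon^{-1}\gamma_0 m_\epsilon$ reproduces (\ref{fundamental 2}) exactly. For a type $2$ boundary disc at $H_\epsilon\cap L_i$ with $\epsilon'=g^{(i)}(\epsilon)$, the three formulas (\ref{type 2 braid relation}) produce (i) the conjugation $\gamma_{\epsilon'}=\gamma_i^{-1}\gamma_\epsilon\gamma_i$ and (ii) the braid relation $(\gamma_i\gamma_\epsilon)^2=(\gamma_\epsilon\gamma_i)^2$. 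Item (i) is built into the definition $\gamma_\epsilon=m_\epsilon^{-1}\gamma_0 m_\epsilon$ since $m_{g^{(i)}(\epsilon)}=m_\epsilon\gamma_i^{\pm 1}$, which is legitimate because the commutativity $[\gamma_i,\gamma_j]=1$ from (\ref{fundamental 1}) makes $m_\epsilon$ independent of ordering. Item (ii) follows from the simpler case $\epsilon=(1,\ldots,1)$ because $\gamma_i$ commutes with $m_\epsilon$:
$$
(\gamma_i\gamma_\epsilon)^2 = (\gamma_i m_\epsilon^{-1}\gamma_0 m_\epsilon)^2 = m_\epsilon^{-1}(\gamma_i\gamma_0)^2 m_\epsilon = m_\epsilon^{-1}(\gamma_0\gamma_i)^2 m_\epsilon = (\gamma_\epsilon\gamma_i)^2,
$$
using the second family of (\ref{fundamental 1}). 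Finally, a type $3$ coordinate disc at $L_i\cap L_j$, lifted to the original Salvetti square at this codimension-$2$ stratum and pushed back to the quotient under the $\mu_2^n$-identification of antipodal edges, has boundary word $\gamma_i\gamma_j\gamma_i^{-1}\gamma_j^{-1}$, which is precisely $[\gamma_i,\gamma_j]=1$, i.e.\ the first family of (\ref{fundamental 1}).

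The main obstacle is the logical interdependence between the relation families. The type $2$ braid reduction above uses $[\gamma_i,\gamma_j]=1$ to slide $\gamma_i$ past $m_\epsilon$, so the type $3$ coordinate-disc relations must be established first, and only then can $m_\epsilon$ be treated as a well-defined word and the type $2$ braid relation be collapsed to $(\gamma_0\gamma_i)^2=(\gamma_i\gamma_0)^2$. A secondary technical point is computing the attaching map of the quotient type $3$ $2$-cell correctly: in $X_2$ the relevant $2$-cell is a square with boundary $\alpha_1\beta_2\bar\alpha_2\bar\beta_1$, and one must track how the full $\mu_2^n$-orbit of each edge collapses to a single oriented loop in $\overline{X_2}$ to obtain the commutator as the boundary word. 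Once these points are handled, every Salvetti relation is seen to be a consequence of (\ref{fundamental 1}) and (\ref{fundamental 2}), which completes the proof.
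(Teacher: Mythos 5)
Your proposal is correct and follows essentially the same route as the paper: the paper formalizes the argument as a pair of mutually inverse homomorphisms $\varphi$ and $\psi$ between $\pi_1(\overline{X_2}^{(s)},s)$ and the abstractly presented group, but the substance is identical to your Tietze reduction — rewriting each Salvetti generator as $\gamma_i$ or $m_\epsilon^{-1}\gamma_0 m_\epsilon$ and checking that the type 1, 2, 3 disc relations collapse to (\ref{fundamental 1}) and (\ref{fundamental 2}), with the same key computation $\psi(abab)=M_\epsilon^{-1}(\Gamma_i\Gamma_0)^2M_\epsilon=M_\epsilon^{-1}(\Gamma_0\Gamma_i)^2M_\epsilon=\psi(baba)$ for the braid relation.
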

We define $G$ as a group generated by $\Gamma_0$ and $\Gamma_i$
$(1\leq i \leq n)$ with the relations 
\begin{align}
\label{model fundamental 1}
&[\Gamma_i,\Gamma_j]=1,  \quad (1\leq i,j\leq n),
\\
\nonumber
&(\Gamma_0\Gamma_i)^2=(\Gamma_i\Gamma_0)^2 \quad (1\leq i\leq n),
\end{align}
and
\begin{equation}
\label{model fundamental 2}
[{M_\epsilon}^{-1}\Gamma_{0}{M_\epsilon},
{M_{\epsilon'}}^{-1}\Gamma_{0}{M_{\epsilon'}}]=1.
\end{equation}
for $H_{\epsilon,\bold R}\cap H_{\epsilon',\bold R}\cap R_{>0}^n\neq \emptyset.$
Here we set
$M_{\epsilon}=\prod_{i\in S(\epsilon)}\Gamma_i$.
We define group homomorphisms 
$$
\varphi:G\to \pi_1(\overline{X_2}^{(s)},s)
\text{ and }
\psi:\pi_1(\overline{X_2}^{(s)},s)\to G,
$$
which are inverse to each other.

\subsubsection{The definition of $\varphi$}
We define $\varphi$ by $\varphi(\Gamma_i)=\gamma_i$
for $i=0,1,\dots, n$. We check that fundamental relations of
$G$ are satisfied in $\pi_1(\overline{X_2}^{(s)},s)$.
The relation (\ref{model fundamental 1})
is satisfied by the definition of $\varphi$.
By the definition of $\varphi$,
we have
$$
\varphi({M_\epsilon}^{-1}\Gamma_{0}{M_\epsilon})
={m_\epsilon}^{-1}\gamma_{0}{m_\epsilon}.
$$
Thus the relation (\ref{model fundamental 2}) is satisfied by
Theorem \ref{satisfy type 2 relation in pi1}.
\subsubsection{The definition of $\psi$}
The group $\pi_1(\overline{X_2}^{(s)},s)$
is generated by type 1 non-spanning arrow $\gamma_{\epsilon,\tau}$
and type 2 generators $\gamma_{i,\tau}$
with the relation of type 1, type 2 and type 3 relations.
We set
$$
\psi(\gamma_{\epsilon,\tau})=M_{\epsilon}^{-1}\Gamma_0M_{\epsilon},\quad
\psi(\gamma_{i,\tau})=\Gamma_i.
$$
Type 1 and type 3 relation are satisfied by the fundamental relations of $G$.
The first relations of (\ref{type 2 braid relation})
is easy to check.
The second relation is obtained by the relation between $\epsilon$
and $g^{(i)}(\epsilon)$.
We check the third relation of (\ref{type 2 braid relation}) by
using
$\psi(a)=\Gamma_i, \psi(b)=M_{\epsilon}^{-1}\Gamma_0M_{\epsilon}$.
Since $\Gamma_i$ and $M_{\epsilon}$ are commutative in $G$, we have
\begin{align*}
\psi(abab)=
\Gamma_i\cdot M_{\epsilon}^{-1}\Gamma_0M_{\epsilon}\cdot 
\Gamma_i\cdot M_{\epsilon}^{-1}\Gamma_0M_{\epsilon} =
M_{\epsilon}^{-1}\Gamma_i\Gamma_0
\Gamma_i\Gamma_0M_{\epsilon}
\end{align*}
and 
\begin{align*}
\psi(baba)=
M_{\epsilon}^{-1}\Gamma_0M_{\epsilon}\cdot 
\Gamma_i\cdot M_{\epsilon}^{-1}\Gamma_0M_{\epsilon}\cdot \Gamma_i
=M_{\epsilon}^{-1}\Gamma_0 
\Gamma_i\Gamma_0\Gamma_iM_{\epsilon}.
\end{align*}
Thus we have the equality $\psi(abab)=\psi(baba)$.
Thus the homomorphism $\psi$ is well defined.
\begin{proof}[Proof of Theorem \ref{main theorem}]
By the definition of $\varphi$ and $\psi$, we see that the homomorphisms 
$\psi$ and $\varphi$ are inverse to each other.
\end{proof}
\subsection{Simplification}
We modify the relation of (\ref{model fundamental 2}) 
and get the simpler form cited in \cite{GK}.
By Theorem \ref{main theorem} and the following proposition, we get
Main Theorem \ref{main theorem introduction}.
\begin{proposition}
\label{restatement of commutativity rel}
For a subset $I$ of $\{1, \dots, n\}$ we set $M(I)=\prod_{i\in I}\Gamma_i$.
Under the relation (\ref{model fundamental 1}),
the relation (\ref{model fundamental 2}) for 
$H_{\epsilon,\bold R}\cap H_{\epsilon',\bold R}\cap R_{>0}^n\neq \emptyset$
is equivalent to the following set of relations.
\begin{quote}
\begin{equation}
\label{simplification}
[M(I)^{-1}\Gamma_0 M(I),M(J)^{-1}\Gamma_0 M(J)]=1
\end{equation}
for all $I,J$ satisfying $I\cap J=\emptyset, I\neq \emptyset, J\neq \emptyset$ and $\#I+\#J\leq n-1$.
\end{quote}
\end{proposition}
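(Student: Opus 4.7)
The plan is to set up a dictionary between the $\epsilon$-indexing of (\ref{model fundamental 2}) and the $(I,J)$-indexing of (\ref{simplification}) via the decomposition $S(\epsilon) = I \sqcup K$, $S(\epsilon') = J \sqcup K$, where $I = S(\epsilon)\setminus S(\epsilon')$, $J = S(\epsilon')\setminus S(\epsilon)$, and $K = S(\epsilon)\cap S(\epsilon')$. Once this dictionary is in place, both implications become routine, because the only interaction between $M_\epsilon, M_{\epsilon'}$ and the commutator structure is through (\ref{model fundamental 1}).

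The main combinatorial step is to rewrite the geometric condition $H_{\epsilon,\bold R}\cap H_{\epsilon',\bold R}\cap \bold R^n_{>0}\neq \emptyset$ in terms of $(I,J,K)$. Subtracting the defining equations of $H_\epsilon$ and $H_{\epsilon'}$ yields $\sum_{i\in I} x_i = \sum_{i\in J} x_i$, and adding them yields $\sum_{i\notin I\cup J\cup K} x_i - \sum_{i\in K} x_i = 1$. A common strictly positive solution exists if and only if $I\neq\emptyset$, $J\neq\emptyset$, and $I\cup J\cup K \neq \{1,\dots,n\}$; in particular the last condition forces $|I|+|J|\leq n-1$. Conversely, for any disjoint non-empty $I,J$ with $|I|+|J|\leq n-1$, taking $\epsilon,\epsilon'$ with $S(\epsilon)=I$, $S(\epsilon')=J$ (so $K=\emptyset$) satisfies these linear constraints and thereby the $H$-intersection condition.

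For the direction (\ref{simplification}) $\Rightarrow$ (\ref{model fundamental 2}), I use the commutativity of $\Gamma_1,\dots,\Gamma_n$ from (\ref{model fundamental 1}) to factor $M_\epsilon = M(K)M(I)$ and $M_{\epsilon'} = M(K)M(J)$. Consequently
\[
M_\epsilon^{-1}\Gamma_0 M_\epsilon = M(K)^{-1}\bigl(M(I)^{-1}\Gamma_0 M(I)\bigr)M(K),
\]
and similarly for $M_{\epsilon'}^{-1}\Gamma_0 M_{\epsilon'}$, so the commutator in (\ref{model fundamental 2}) is the $M(K)$-conjugate of the commutator appearing in (\ref{simplification}) for the indices $(I,J)$. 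By the combinatorial step this $(I,J)$ satisfies $I\cap J=\emptyset$, $I,J\neq\emptyset$, and $|I|+|J|\leq n-1$, so (\ref{simplification}) forces the conjugated commutator to be trivial. The reverse direction (\ref{model fundamental 2}) $\Rightarrow$ (\ref{simplification}) is obtained by specializing to the pair $\epsilon,\epsilon'$ with $S(\epsilon)=I$, $S(\epsilon')=J$ furnished by the combinatorial step, where $K=\emptyset$ and $M_\epsilon = M(I)$, $M_{\epsilon'} = M(J)$.

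The whole argument is essentially bookkeeping, and there is no substantial obstacle; the only nontrivial input is the elementary linear-algebra verification that the $H$-intersection condition on $(\epsilon,\epsilon')$ translates exactly into $I,J\neq\emptyset$ together with $|I|+|J|+|K|\leq n-1$.
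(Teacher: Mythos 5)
Your proof is correct and follows essentially the same route as the paper: decompose via $K=S(\epsilon)\cap S(\epsilon')$, use the commutativity relations to reduce the commutator in (\ref{model fundamental 2}) to the one indexed by $(I,J)=(S(\epsilon)\setminus K,\,S(\epsilon')\setminus K)$, translate the condition $H_{\epsilon,\bold R}\cap H_{\epsilon',\bold R}\cap \bold R^n_{>0}\neq\emptyset$ into $I,J\neq\emptyset$ and $I\cup J\cup K\neq\{1,\dots,n\}$, and specialize to $K=\emptyset$ for the converse. Your explicit observation that the commutator in (\ref{model fundamental 2}) is the $M(K)$-conjugate of the one in (\ref{simplification}) makes precise a step the paper only states implicitly.
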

\begin{proof}
Throughout this proof we assume the commutativity of $\Gamma_1, \dots, \Gamma_n$.
First we assume the condition (\ref{simplification})
and prove the relation (\ref{model fundamental 2}).
We set $K=S(\epsilon)\cap S(\epsilon')$.
By the definition of $M_{\epsilon}$ and the commutativity of $\Gamma_i$, 
the condition (\ref{model fundamental 2}) can be rewrite as
\begin{equation}
\label{reduction from cell complex}
[{M^*_\epsilon}^{-1}\Gamma_{0}{M^*_\epsilon},
{M^*_{\epsilon'}}^{-1}\Gamma_{0}{M^*_{\epsilon'}}]=1.
\end{equation}
where 
$M_{\epsilon}^*=\prod_{i\in S(\epsilon)-K}\Gamma_i$
and $M_{\epsilon'}^*=\prod_{i\in S(\epsilon')-K}\Gamma_i$.
This is one of the conditions in 
(\ref{simplification})
by setting $I=S(\epsilon)-K$ and $J=S(\epsilon')-K$.
We check that $I$ and $J$ satisfies the required conditions.
The condition $I\cap J=\emptyset$ is clear.
If $M_{\epsilon}^*=\emptyset$, then $S(\epsilon)\subset S(\epsilon')$
and this contradicts to the condition 
$H_{\epsilon,\bold R}\cap H_{\epsilon',\bold R}\cap \bold R_{>0}^n\neq \emptyset$.
If $\#I+\#J=n$, then $\epsilon'=-\epsilon$. This also contradicts to the condition for $\epsilon$ and $\epsilon'$
since $H_{\epsilon,\bold R}\cap H_{-\epsilon,\bold R}=\emptyset$.

Next we assume the condition (\ref{model fundamental 2})
and prove the relation (\ref{simplification}).
Let $I$ and $J$ be subsets in $\{1, \dots, n\}$ satisfying the condition of
(\ref{simplification}). We define
$\epsilon=(\epsilon_1,\cdots, \epsilon_n)$
and
$\epsilon'=(\epsilon'_1,\cdots, \epsilon'_n)$
by
$$
\epsilon_i=\begin{cases}
1\quad (i\notin I),
\\
-1\quad (i\in I),
\end{cases}\quad
\epsilon'_i=\begin{cases}
1\quad (i\notin J),
\\
-1\quad (i\in J).
\end{cases}
$$
Then the relation (\ref{reduction from cell complex})
 becomes the relation (\ref{simplification}).
We check the condition $H_{\epsilon,\bold R}\cap H_{\epsilon',\bold R}\cap \bold R^n_{>0}\neq \emptyset$.
We set $K=\{1, \dots, n\}-(I\cup J)$.
Then we have $K\neq \emptyset$. The system of equations
$$
\begin{cases}
\sum_{i\notin I}x_i-\sum_{i\in I}x_i=1,
\\
\sum_{j\notin J}x_j-\sum_{j\in J}x_j=1
\end{cases}
$$
is equivalent to
$$
\begin{cases}
\sum_{i\in K}x_i=1,
\\
\sum_{i\in I}x_i=\sum_{j\in J}x_j.
\end{cases}
$$
Thus it has a solution $x=(x_i)\in \bold R_{>0}^n$.
\end{proof}

\end{document}